\newcommand{\field}[1]{\mathbb{#1}}
\newcommand{\Z}{\field{Z}}
\newcommand{\R}{\field{R}}
\renewcommand{\H}{\mathbb{H}}
\newtheorem{thm}{Theorem}
\newtheorem{prop}{Proposition}
\theoremstyle{remark}
\newtheorem{rmk}{Remark}
\newtheorem{ex}{Example}
\theoremstyle{definition}
\title{Linear Incongruences for Generalized Eta-Quotients}
\author{Steffen L\"obrich }
\begin{document}

\maketitle

\begin{abstract}
For a given generalized eta-quotient, we show that linear progressions whose residues fulfill certain quadratic equations do not give rise to a linear congruence modulo any prime. This recovers known results for classical eta-quotients, especially the partition function, but also yields linear incongruences for more general weakly holomorphic modular forms like the Rogers-Ramanujan functions.
\end{abstract}

\section{Introduction and Statement of Results}
Ever since Ramanujan established his famous linear congrucences 
\begin{equation}\label{ram}
\begin{split}
p(5n+4)&\equiv 0 \pmod{5}\\
p(7n+5)&\equiv 0 \pmod{7} \\
p(11n+6)&\equiv 0\pmod{11}
\end{split}
\end{equation}
for the partition function $p(n)$, the challenge of proving and generalizing them triggered a vast amount of research. For instance, Ono \cite{ono} found analogues of \eqref{ram} for every modulus coprime to $6$. See also \cite{ao} and the sources contained therein for further results. However, recent work of Radu \cite{ra} proved that there are no linear congruences of $p(n)$ modulo $2$ and $3$, affirming a famous conjecture of Subbarao. The main ingredients of his proof are skillful computations and the {\it $q$-expansion principle} due to Deligne and Rapoport \cite{dr}. Adapting the methods of Radu's proof,   Ahlgren and Kim \cite{ak} showed analogous results for the mock theta functions $f(q)$ and $\omega(q)$, as well as for certain classes of weakly holomorphic modular forms, including (classical) eta-quotients. In this paper, we extend their approach to generalized eta-quotients.\\

These functions are defined as follows: For $\delta\in\Z^+$ and a residue class $g\pmod{\delta}$, we set
$$
\eta_{\delta, g}(z):=q^{\frac{\delta}{2} P_2\left(\frac{g}{\delta}\right)}\prod_{m>0\atop m\equiv  g\pmod{\delta}}\left(1-q^m\right)\prod_{m>0\atop m\equiv  -g\pmod{\delta}}\left(1-q^m\right),
$$
where $z\in\H$ and $q:=e^{2\pi iz}$ throughout. Here, for $x\in\R$ and $\{x\}:=x-\left\lfloor x\right\rfloor$, we let
$$
P_2(x):=\{x\}^2 -\{x\}+\frac16
$$
be the \emph{second Bernoulli function}. \\

Note that if 
$$
\eta(z):=q^{1/24}\prod_{m> 0}(1-q^m)
$$
denotes the usual Dedekind eta-function, then 
$$
\eta_{\delta, 0}(z)=\eta(\delta z)^2\qquad\text{ and }\qquad\eta_{\delta, \frac{\delta}{2}}(z)=\frac{\eta\left(\frac{\delta}{2}z\right)^2}{ \eta(\delta z)^2}.
$$ 
Furthermore, for $g\notin\left\{ 0,\frac{\delta}{2}\right\}$ we have
$$
\eta_{\delta, g}(z)^{-1} = q^{-\frac{\delta}{2} P_2\left(\frac{g}{\delta}\right)}\sum_{n\geq 0} p_{\delta, g}(n)q^n,
$$
where $p_{\delta, g}(n)$ denotes the number of partitions of $n$ with all parts congruent to $\pm g\pmod{\delta}$.\\

For $\delta=5$, these functions occur in the well-known {\it Rogers-Ramanujan identities}, which state that
$$
q^{\frac{1}{60}}\eta_{5,1}^{-1}(z)=\sum_{n\geq 0}\frac{q^{n^2}}{(q;q)_n}=1+q+q^2+q^3+2q^4+2q^5+3q^6+\dots
$$
and 
$$
q^{-\frac{11}{60}}\eta_{5,2}^{-1}(z)=\sum_{n\geq 0}\frac{q^{n^2+n}}{(q;q)_n}=1+q^2+q^3+q^4+q^5+2q^6+\dots,
$$
where $(q;q)_n:=\prod_{j=1}^{n}(1-q^j)$.\\

For $N\in\Z^+$, a residue class $a\pmod{N}$, let $r:=(r_{\delta, g})_{\delta|N, g\pmod{\delta}}$ be a tuple of half-integers, indexed by the divisors of $N$ and their residue classes, with $r_{\delta, g}\in\Z$ unless $g=0$ or $g=\frac{\delta}{2}$. In this paper, we study {\it generalized eta-quotients} of the form
$$
H_{r} (z):=\prod_{\delta |N\atop g\pmod{\delta}}\eta_{\delta, g}(z)^{r_{\delta,g}}=:q^{P(r)}\sum_{n\geq 0}c_r (n)q^n,
$$
where 
$$
P(r):=\frac12\sum_{\delta |N\atop g\pmod{\delta}}\delta r_{\delta, g}P_2\left(\frac{g}{\delta}\right).
$$
Note that the denominator of $P(r)$ divides $12N$. \\

For every modulus $m\in\Z ^+$ and residue class $t\pmod{m}$, we give conditions on prime numbers $p$ that guarantee that the linear progression $t\pmod{m}$ does not satisfy a linear congruence $\operatorname{mod} p$ for the generalized eta-quotient $H_{r}$. Here, for any residue class $a\pmod{N}$, we denote by $r_a$ the tuple $(r_{\delta, ag})_{\delta|N, g\pmod{\delta}}$.
\begin{thm}\label{thm}
Let $m\in\Z^+$ and $t\in\{0,\dots, m-1\}$. For $a,d\in\Z$ with $ad\equiv 1\pmod{24Nm}$, let $n$ be the smallest nonnegative integer for which
$$
d^2(n+P(r_a))- P(r)\equiv t\pmod{m}.
$$
Then for every prime $p$ not dividing $c_{r_a}(n)$, we have
$$
\sum_{n\geq 0}c_r(mn+t)q^n\not\equiv 0\pmod{p}.
$$
\end{thm}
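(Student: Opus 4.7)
The plan is to follow the strategy of Radu \cite{ra} and Ahlgren-Kim \cite{ak}, combining the modular transformation law of generalized eta-quotients with the $q$-expansion principle of Deligne-Rapoport. Suppose, for a contradiction, that a prime $p$ satisfies
\[
F(z)\;:=\;\sum_{n\geq 0}c_r(mn+t)\,q^n\;\equiv\;0\pmod{p}.
\]
The goal is to re-express $F$ at a different cusp so that its leading coefficient there is, up to a $p$-adic unit, equal to $c_{r_a}(n)$ for the $n$ in the theorem, and then to invoke the $q$-expansion principle to force $p\mid c_{r_a}(n)$.

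First I would realize $F$ as a sieve of $H_r$. Since $H_r(z)=q^{P(r)}\sum_{n\geq 0}c_r(n)q^n$, the standard additive-character average yields
\[
F(z)\;=\;\frac{q^{-(t+P(r))/m}}{m}\sum_{j=0}^{m-1}e^{-2\pi i j(t+P(r))/m}\,H_r\!\left(\tfrac{z+j}{m}\right),
\]
so the hypothesis becomes the mod-$p$ vanishing of this explicit linear combination of translates of $H_r$. To switch cusps I would pick $b,c\in\Z$ with
\[
\gamma\;:=\;\begin{pmatrix} a & b\\ c & d\end{pmatrix}\in\SL_2(\Z),\qquad c\equiv 0\pmod{24Nm},
\]
which is possible because $ad\equiv 1\pmod{24Nm}$ forces $\gcd(a,24Nm)=1$.

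The transformation law for generalized eta-quotients (Schoeneberg-Biagioli) then supplies the crucial identity: on $\gamma$ as above, each $\eta_{\delta,g}$ transforms with a unitary multiplier that permutes the residue classes via $g\mapsto ag\pmod{\delta}$, yielding
\[
H_r(\gamma z)\;=\;\chi(\gamma)\,(cz+d)^{k}\,H_{r_a}(z)
\]
for a root of unity $\chi(\gamma)$ and total weight $k=\sum r_{\delta,g}$. Substituting $z\mapsto\gamma z$ into the sieve identity, decomposing each determinant-$m$ matrix $\begin{pmatrix}1&j\\0&m\end{pmatrix}\gamma$ as an element of $\SL_2(\Z)$ times a residual upper-triangular piece, and applying the transformation law, I obtain an identity that expresses $F(\gamma z)$ as a root-of-unity combination of translates of $H_{r_a}$. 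Reading off the $q$-expansion at $\infty$ and carefully tracking the powers of $q$ contributed by $P(r_a)$, the automorphy factor $(cz+d)^k$, and the prefactor $q^{-(t+P(r))/m}$, the vanishing of $F$ modulo $p$ translates into the mod-$p$ vanishing of a series of the shape
\[
\sum_{\substack{n\geq 0\\ d^2(n+P(r_a))-P(r)\equiv t\,(m)}}c_{r_a}(n)\,q^{(d^2(n+P(r_a))-P(r)-t)/m},
\]
up to a factor in $\Z_p^\times$.

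The $q$-expansion principle of Deligne-Rapoport then forces every coefficient of this series to be divisible by $p$. In particular the leading term, indexed by the smallest nonnegative $n$ satisfying the displayed congruence, has coefficient equal to $c_{r_a}(n)$ times a $p$-adic unit, and the hypothesis $p\nmid c_{r_a}(n)$ gives the desired contradiction. The main obstacle I anticipate is the multiplier-system bookkeeping: with half-integer exponents $r_{\delta,0}$ and $r_{\delta,\delta/2}$, the function $H_r$ is modular only with a nontrivial character on a congruence subgroup whose level is a multiple of $24N$, and one must verify both that the congruence $ad\equiv 1\pmod{24Nm}$ is strong enough to make $\chi(\gamma)$ a $p$-adic unit and that, after multiplying by an auxiliary eta-quotient to clear fractional $q$-powers and produce a holomorphic modular form with $p$-integral $q$-expansions at every cusp, the $q$-expansion principle applies verbatim.
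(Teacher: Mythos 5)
Your proposal is essentially the paper's proof: the same character sieve defining $H_{r,m,t}$, the same decomposition $\left(\begin{smallmatrix}1&\lambda\\0&m\end{smallmatrix}\right)A=A_\lambda\left(\begin{smallmatrix}1&\lambda'\\0&m\end{smallmatrix}\right)$ with the Robins/Schoeneberg transformation law permuting $g\mapsto ag$, and the same application of the Deligne--Rapoport $q$-expansion principle to $(p^{-1}H_{r,m,t})^{24Nm}\Delta^j$ to force $p\mid c_{r_a}(n)$. One small correction to your bookkeeping: the weight is $k=\sum_{\delta\mid N}r_{\delta,0}$ (only the $g=0$ factors contribute to the automorphy factor), not $\sum r_{\delta,g}$.
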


\begin{rmk}
Since we always have $c_r(0)=1$, the linear incongruence is satisfied for any prime $p$ if 
$$
d^2P(r_a)- P(r)\equiv t\pmod{m}.
$$

\end{rmk}

\begin{rmk}
By work of Ahlgren and Boylan \cite{ab}, if the conditions of Theorem \ref{thm} are satisfied, we even have that
$$
\#\left\{n\leq X: \, c_r(mn+t)\not\equiv 0\pmod{p}\right\}\gg_{p,r,m,t,K} \frac{\sqrt{X}}{\log X}\left(\log\log X\right)^K
$$
for every positive integer $K$.
\end{rmk}

Theorem \ref{thm} has several immediate applications.
\begin{ex}
Let $N=a=1$ and $r=-\frac12$. Then 
$$
H_{-\frac12}(z)=\eta^{-1}(z)=q^{-\frac{1}{24}}\sum_{n\geq 0}p(n)q^n.
$$
Since $p(0)=p(1)=1$, Theorem \ref{thm} then implies that
$$
\sum_{n\geq 0}p(mn+t)q^n\not\equiv 0\pmod{\ell}
$$
for every prime $\ell$ if there is a $d$ coprime to $6m$ with 
$$
t\equiv\frac{1-d^2}{24}\pmod{m} \quad\text{ or }\quad t\equiv\frac{1+23d^2}{24}\pmod{m}.
$$

Now assume that $\ell\geq 5$ is prime with $\left(\frac{-23}{\ell}\right)=-1$. Then the classes $d^2\pmod{\ell}$ and $-23d^2\pmod{\ell}$ together run over all residue classes except for $0$ as $d$ runs over residue classes coprime to $\ell$. Since $(\ell,24)=1$, the classes $\frac{1-d^2}{24}$ and $\frac{1+23d^2}{24}$ cover every residue class modulo $\ell$ except for $\frac{1-\ell^2}{24}$. It follows that we can only have a linear congruence
$$
\sum_{n\geq 0}p(\ell n+t)q^n\equiv 0\pmod{\ell}
$$
if $t\equiv\frac{1-\ell^2}{24}\pmod{\ell}$. This result was shown by Kiming and Olsson for every prime $\ell$ \cite{ko}. In particular, for $\ell\in\{5,7,11\}$, this implies that the residues in \eqref{ram} are the only ones for which such a congruence can hold. 
\end{ex}

\begin{ex}
More generally, Theorem \ref{thm} specializes to classical eta-quotients if $r_{\delta, g}=0$ for $g\neq 0$. Then we have $P(r_a)=\frac{1}{12}\sum_{\delta |N}\delta r_{\delta}$ for all $a$. Since we always have $c_a(0)=1$, we obtain that for every prime $p$, we have
$$
\sum_{n\geq 0}c_r(mn+t)q^n\not\equiv 0\pmod{p}\qquad\text{ if }\qquad t\equiv\frac{d^2-1}{12}\sum_{\delta |N}\delta r_{\delta}\pmod{m}
$$ 
for some $d$ coprime to $6Nm$.
\end{ex}

\begin{ex}
Another interesting example are partitions occurring in {\it Schur's Theorem} \cite{shu}. These are given by 
$$
q^{\frac{1}{12}}\eta_{6,1}^{-1}(z)=\sum_{n\geq 0}p_{6,1}(n)q^n = 1+q+q^2+q^3+q^4+2q^5+2q^6+\dots
$$ 
with $N=6$, $r_{6,1}=-1$ and $r_{\delta, g}=0$ otherwise, and $P(r_a)=-\frac{1}{12}$ for every $a$ coprime to $6$.
Thus Theorem 1 implies that 
$$
\sum_{n\geq 0}p_{6,1}(mn+t)q^n\not\equiv 0\pmod{p}
$$
for any prime $p$ if there is a $d$ coprime to $6m$ and $j\in\{-1,11,23,35,47\}$ with
$$
t\equiv\frac{1+jd^2}{12}\pmod{m}. 
$$
As in Example 1, if $\ell\geq 5$ is a prime with at least one of $\left(\frac{-11}{\ell}\right)$, $\left(\frac{-23}{\ell}\right)$, $\left(\frac{-35}{\ell}\right)$, or $\left(\frac{-47}{\ell}\right)$ equal to $-1$, then there can only be a linear congruence if $t\equiv \frac{1-\ell^2}{12}\pmod{\ell}$.
\end{ex}

\begin{ex}
Now we take a closer look at the Rogers-Ramanujan functions $\eta_{5,1}^{-1}$ and $\eta_{5,2}^{-1}$. If $H_{r_1}=H_{r_4}=\eta_{5,1}^{-1}$, then we have $N=5$, $r_{5,1}=-1$, $r_{5,2}=0$, $H_{r_2}=H_{r_3}=\eta_{5,2}^{-1}$, and 
$$
P(r_a)=\begin{cases}
-\frac{1}{60} & \text{ if $a\equiv 1$ or $4 \pmod{5}$}, \\
\frac{11}{60}& \text{ if $a\equiv 2$ or $3 \pmod{5}$}.
\end{cases}
$$
Hence Theorem 1 states that $\sum_{n\geq 0}p_{5,1}(mn+t)q^n\not\equiv 0\pmod{p}$ for every prime $p$, if 
\begin{align*}
t &\equiv nd^2 +\frac{1-d^2}{60}\pmod{m} \text{ for $n\in\{0,1,2,3\}$ and $d\equiv 1, 4\pmod{5}$ coprime to $6m$} 
\end{align*}
or
\begin{align*}
t &\equiv nd^2 +\frac{11d^2+1}{60}\pmod{m}\text{ for $n\in\{0,2,3,4,5\}$ and $d\equiv 2, 3\pmod{5}$ coprime to $6m$.} 
\end{align*}

If we switch the roles of $\eta_{5,1}^{-1}$ and $\eta_{5,2}^{-1}$, we obtain that $\sum_{n\geq 0}p_{5,2}(mn+t)q^n\not\equiv 0\pmod{p}$ for every prime $p$, if 
\begin{align*}
t & \equiv nd^2 -\frac{d^2+11}{60}\pmod{m} \text{ for $n\in\{0,1,2,3\}$ and $d\equiv 2,3 \pmod{5}$ coprime to $6m$} 
\end{align*}
or
\begin{align*}
t &\equiv nd^2 +\frac{11(d^2-1)}{60}\pmod{m}\text{ for $n\in\{0,2,3,4,5\}$ and $d\equiv 1, 4\pmod{5}$ coprime to $6m$.}
\end{align*}

In contrast, applying work of Gordon \cite{go}, Hirschhorn \cite{hh} found linear congruences $\pmod{2}$ for $p_{5,1}$ and $p_{5,2}$. For example, Theorem 3 of \cite{hh} states that
$$
p_{5,1}(98n+t)\equiv 0 \pmod{2}
$$
for $t\in\{23,37,51,65,79,93\}$ and 
$$
p_{5,2}(98n+t)\equiv 0 \pmod{2}
$$
for $t\in\{6,20,34,62,76,90\}$.
The above discussion precludes all the other residues $\pmod{98}$ except for $t\in\{9,16,58,72,86\}$ resp. $t\in\{13,27,48,55,97\}$ from satisfying these congruences.
\end{ex}

The paper is organized as follows: In Section 2 we define generalized eta-quotients and study their transformation behavior under $\Gamma_0(12N)$, slightly adapting a result of Robins \cite{rob}. This will lead to modularity properties for the functions $H_{m,r,t}$ whose Fourier coefficients are given by those of $H_{r}$ on the arithmetic progression $t\pmod{m}$. In Section 3 we prove Theorem \ref{thm} using the $q$-expansion principle.

\section*{Acknowledgements}
We thank Ken Ono for suggesting this project and for his advice and support. We also thank Min-Joo Jang, Wenjun Ma, and the referee for helpful discussions, the Fulbright Commission for their generous support, and the Department of Mathematics and Computer Science at Emory University for their hospitality.  

\section{Transformation properties of Eta-Quotients}

We begin by studying modularity properites of $\eta_{\delta, g}$. For $A=\left(\begin{smallmatrix} a&b\\c&d\end{smallmatrix}\right)\in\Gamma_0(\delta)$ we define $\mu_{A,g,\delta}$ by 
$$
\eta_{\delta, g}\left( Az\right)=e\left(\mu_{A,g,\delta}\right)j(A,z)^{\delta_{g,0}}\eta_{\delta,ag}(z),
$$
where $j(A,z):=cz+d$ and $e(w):=e^{2\pi iw}$ throughout. An analogue of the following proposition for the subgroup $\Gamma_1(\delta)$ was shown in Theorem 2 of \cite{rob}.

\begin{prop} \label{muagd}
For $A=\left(\begin{smallmatrix} a&b\\c&d\end{smallmatrix}\right)\in\Gamma_0(12\delta)$ we have
$$
\mu_{A,g,\delta}\equiv \frac12 db\delta P_2\left(\frac{ag}{\delta}\right)-\frac{a-1}{4}+\frac{1}{2}\left\lfloor \frac{ag}{\delta}\right\rfloor\pmod{1}.
$$
\end{prop}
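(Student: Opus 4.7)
The plan is to reduce to Robins' Theorem 2 in \cite{rob}, which establishes the analogous formula for the smaller group $\Gamma_1(\delta)$. The key new feature in $\Gamma_0(12\delta)$ is that the index $g$ of $\eta_{\delta,g}$ shifts to $ag$ under the transformation; this shift is exactly what produces the extra terms $-(a-1)/4$ and $\tfrac12\lfloor ag/\delta\rfloor$ in the multiplier.

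Given $A\in\Gamma_0(12\delta)$, I would choose an auxiliary matrix $V\in\Gamma_0(12\delta)$ whose top-left entry is $a^{-1}\pmod{12\delta}$ (for example $V=\left(\begin{smallmatrix} a^{-1} & * \\ 12\delta & a\end{smallmatrix}\right)$, with $*$ chosen so that $\det V=1$), so that $B:=VA\in\Gamma_1(12\delta)\subset\Gamma_1(\delta)$. Writing $A=V^{-1}B$ and unfolding the defining relation of $\mu$ twice, once for each factor, yields the cocycle identity
$$\mu_{A,g,\delta}\equiv \mu_{V^{-1},g,\delta} + \mu_{B,ag,\delta}\pmod 1,$$
which reduces the problem to computing the two multipliers on the right.

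For the $B$-piece, Robins' theorem applies directly, since $B\in\Gamma_1(\delta)$: the contribution modulo $1$ is essentially $\tfrac12 d_B b_B\delta P_2(ag/\delta)$. A short calculation substituting the entries of $V$ and $B$ in terms of those of $A$, using $ad-bc=1$ and $c\equiv 0\pmod{12\delta}$, should reduce this to $\tfrac12 db\delta P_2(ag/\delta)\pmod 1$. For the $V^{-1}$-piece, I would substitute $V^{-1}$ directly into the product definition: the reindexing $m\mapsto a^{-1}m$ modulo $\delta$ maps the product $\prod_{m\equiv\pm g}(1-q^m)$, up to a Dedekind-sum-type phase, to $\prod_{m\equiv\pm ag}(1-q^m)$, while the shift of the leading $q$-exponent from $\tfrac{\delta}{2}P_2(g/\delta)$ to $\tfrac{\delta}{2}P_2(ag/\delta)$ introduces the floor correction $\tfrac12\lfloor ag/\delta\rfloor$ whenever $ag$ lies outside the fundamental interval $[0,\delta)$.

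The main obstacle will be computing the Dedekind-sum remainder in $\mu_{V^{-1},g,\delta}$ and verifying that, modulo $1$, it collapses to $-\tfrac{a-1}{4}$. The hypothesis $A\in\Gamma_0(12\delta)$ (rather than merely $\Gamma_0(\delta)$) is precisely what guarantees that the denominators of $24$ inherent in the classical $\eta$-multiplier are cleared, leaving the clean congruence stated in the proposition. This mirrors the role of the factor $12$ in the classical treatment of $\eta$-quotients on $\Gamma_0(12N)$; the novelty here is simply tracking the index shift $g\mapsto ag$ carefully through the product.
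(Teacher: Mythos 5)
Your cocycle reduction $\mu_{A,g,\delta}\equiv \mu_{V^{-1},g,\delta}+\mu_{B,ag,\delta}\pmod 1$ is legitimate, but it does not reduce the difficulty, and the step that carries all the content is missing. With your choice of $V$, the matrix $V^{-1}=\left(\begin{smallmatrix} a & -* \\ -12\delta & a^{-1}\end{smallmatrix}\right)$ has the same top-left entry $a$ (mod $\delta$) as $A$ itself, so $\mu_{V^{-1},g,\delta}$ is exactly the original problem for a matrix no more special than $A$. The plan to obtain it ``by substituting $V^{-1}$ directly into the product definition'' cannot work as described: the product defining $\eta_{\delta,g}$ is a $q$-expansion at $i\infty$, and applying a general fractional linear transformation admits no reindexing $m\mapsto a^{-1}m$ of that product --- extracting the resulting phase \emph{is} the transformation problem you are trying to solve. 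Your heuristic for the correction terms is also off: $P_2$ depends only on the fractional part of its argument, so replacing $P_2(g/\delta)$ by $P_2(ag/\delta)$ produces no exponent shift and hence no floor term that way; both $-\frac{a-1}{4}$ and $\frac12\lfloor ag/\delta\rfloor$ come out of the Dedekind-sum part of the multiplier, which your sketch never engages with. The sentence asserting that the ``Dedekind-sum remainder \dots collapses to $-\frac{a-1}{4}$'' is precisely the statement to be proved, and it is left as an assertion.

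For comparison, the paper does not factor $A$ at all. It starts from the closed formula on p.~122 of Robins, valid on all of $\Gamma_0(\delta)$, namely $\mu_{A,g,\delta}=\sum_{\mu=1}^{a-1}\bigl(\bigl(\tfrac{\mu}{a}\bigr)\bigr)\bigl(\bigl(\tfrac{c}{\delta}\tfrac{\mu}{a}+\tfrac{g}{\delta}\bigr)\bigr)+\tfrac{\delta b}{2a}P_2\bigl(\tfrac{ag}{\delta}\bigr)-\tfrac{c}{12\delta a}$, invokes Schoeneberg's result that the denominator of $\mu_{A,g,\delta}$ divides $12\delta$ so that multiplying by $ad\equiv 1\pmod{12\delta}$ does not change its class mod $1$, and then evaluates the resulting sum elementarily (the identity $\sum_{\mu=0}^{a-1}\lfloor\frac{\mu}{a}+x\rfloor=\lfloor ax\rfloor$ is what produces the $\frac12\lfloor ag/\delta\rfloor$ and $-\frac{a-1}{4}$ terms). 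To salvage your route you would need an independent evaluation of $\mu_{V^{-1},g,\delta}$, which in practice means importing a Dedekind-sum formula of exactly this type anyway; at that point the factorization buys you nothing.
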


\begin{proof}
An equation on p.~122 of \cite{rob} states that (note the different normalization of $\mu_{A,g,\delta}$)
$$
\mu_{A,g,\delta}=\sum_{\mu=1}^{a-1}\left(\left(\frac{\mu }{a}\right)\right)\left(\left(\frac{c}{\delta}\frac{\mu }{a}+\frac{g}{\delta}\right)\right)+\frac{\delta b}{2a}P_2\left(\frac{ag}{\delta}\right)-\frac{c}{12\delta a}
$$
with 
$$
\left(\left(x\right)\right):=\begin{cases}
\{x\}-\frac12 & \text{ if $x\neq 0$,}\\
0 & \text{if $x=0$.}
\end{cases}
$$
By Eqn.~(34) of \cite{sch}, Ch.~VIII \S 4,  the denominator of $\mu_{A,g,\delta}$ divides $12\delta$. This implies that for $A\in\Gamma_0(12\delta)$ we have, using that $ad\equiv 1\pmod{12\delta}$, 
$$
\mu_{A,g,\delta}\equiv ad \sum_{\mu=1}^{a-1}\left(\left(\frac{\mu }{a}\right)\right)\left(\left(\frac{c}{\delta}\frac{\mu }{a}+\frac{g}{\delta}\right)\right)+\frac{\delta db}{2}P_2\left(\frac{ag}{\delta}\right)\pmod{1}.
$$
We compute
\begin{multline*}
ad\sum_{\mu=1}^{a-1}\left(\left(\frac{\mu }{a}\right)\right)\left(\left(\frac{c}{\delta}\frac{\mu }{a}+\frac{g}{\delta}\right)\right)
=d\sum_{\mu=1}^{a-1}\left(\mu-\frac{a}{2}\right)\left(\left(\frac{c}{\delta}\frac{\mu }{a}+\frac{g}{\delta}\right)\right)\\
=d\sum_{\mu=1}^{a-1}\mu\left(\left(\frac{c}{\delta}\frac{\mu }{a}+\frac{g}{\delta}\right)\right)-\frac{ad}{2}\sum_{\mu=1}^{a-1}\left(\left(\frac{\mu }{a}+\frac{g}{\delta}\right)\right).
\end{multline*}
Now 
$$
d\sum_{\mu=1}^{a-1}\mu\left(\left(\frac{c}{\delta}\frac{\mu }{a}+\frac{g}{\delta}\right)\right)\equiv d\sum_{\mu=1}^{a-1}\mu\left(\frac{c}{\delta}\frac{\mu }{a}+\frac{g}{\delta}-\frac12\right)
\equiv \frac{a-1}{2}\left(\frac{g}{\delta}-\frac12\right) \pmod{1}
$$
and 
\begin{multline*}
\frac{ad}{2}\sum_{\mu=1}^{a-1}\left(\left(\frac{\mu }{a}+\frac{g}{\delta}\right)\right)\equiv \frac{1}{2}\sum_{\mu=1}^{a-1}\left(\frac{\mu }{a}+\frac{g}{\delta}-\left\lfloor\frac{\mu }{a}+\frac{g}{\delta}\right]-\frac12\right)\\
\equiv \frac{a-1}{4}+\frac{a-1}{2}\left(\frac{g}{\delta}-\frac12\right) -\frac{1}{2}\left\lfloor\frac{ag}{\delta}\right\rfloor \pmod{1},
\end{multline*}
using that $\sum_{\mu=0}^{a-1}\left\lfloor\frac{\mu}{a}+x\right\rfloor=\lfloor ax\rfloor$.

\end{proof}
Let 
\begin{equation}\label{Hrmt}
H_{r,m,t}(z):=\frac{1}{m}\sum_{\lambda\pmod{m}}e\left(-\frac{\lambda}{m}(t+P(r))\right)H_{r}\left(\begin{pmatrix}
1 &  \lambda \\ & m
\end{pmatrix}z\right) ,
\end{equation}
so that
$$
H_{r,m,t}(z)= q^{\frac{t+P(r)}{m}}\sum_{n\geq 0}c_r(mn+t)q^n
$$
and for every $\lambda\pmod{m}$ and $\left(\begin{smallmatrix} a&b\\c&d\end{smallmatrix}\right)\in\Gamma_0(m)$ choose $\lambda'$ with
$$
a\lambda' \equiv b+d\lambda\pmod{m}.
$$
Note that $\lambda'$ runs over all residue classes modulo $m$ with $\lambda$. \\

Moreover, let $k:=\sum_{\delta |N} r_{\delta, 0}$, so that $k$ is the weight of $H_r$.

\begin{prop}\label{slash}
For $A\in \Gamma_0(24Nm)$, we have
\begin{equation*}
H_{r, m, t}(Az)=j(A,z)^k\frac{\zeta}{m}\sum_{\lambda\pmod{m}}e\left(\frac{\lambda}{m}\left(d^2 P(r_a)-P(r)-t\right)-\frac{\lambda'}{m} P(r_a)\right)H_{r_a}\left(\begin{pmatrix}
1 &  \lambda' \\ & m
\end{pmatrix}z\right),
\end{equation*}
where $\zeta$ is a $24Nm$-th root of unity depending on $r$, $m$, and $A$. In particular, $H_{r,m,t}^{24Nm}$ is a weakly holomorphic modular form of weight $24Nmk$ for $\Gamma_1(24Nm)$, i.e.~a meromorphic modular form whose poles are supported at the cusps. 
\end{prop}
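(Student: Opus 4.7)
The plan is to conjugate $\sigma_\lambda:=\left(\begin{smallmatrix} 1 & \lambda \\ 0 & m\end{smallmatrix}\right)$ through $A=\left(\begin{smallmatrix} a & b \\ c & d\end{smallmatrix}\right)\in\Gamma_0(24Nm)$. Writing $\sigma_\lambda A=A'\sigma_{\lambda'}$ forces
$$
A'=\begin{pmatrix} a+\lambda c & b' \\ mc & d-c\lambda'\end{pmatrix},\qquad b':=\frac{b+d\lambda-(a+\lambda c)\lambda'}{m},
$$
which is integral thanks to $a\lambda'\equiv b+d\lambda\pmod m$. Since $24Nm\mid c$, we have $A'\in\Gamma_0(24N)\subseteq\bigcap_{\delta\mid N}\Gamma_0(12\delta)$, so Proposition~\ref{muagd} applies; the map $\lambda\mapsto\lambda'$ is a bijection of $\Z/m\Z$, and a direct calculation yields the cocycle identity $j(A',\sigma_{\lambda'}z)=j(A,z)$.

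Applying $\eta_{\delta,g}(A'z)=e(\mu_{A',g,\delta})\,j(A',z)^{\delta_{g,0}}\,\eta_{\delta,a'g}(z)$ to each eta-factor of $H_r$ and using that $a':=a+\lambda c\equiv a\pmod{24N}$ (so $\eta_{\delta,a'g}=\eta_{\delta,ag}$, and the product reindexes to $H_{r_a}$) gives
$$
H_r(\sigma_\lambda Az)=j(A,z)^k\,e\!\Bigl(\sum_{\delta,g}r_{\delta,g}\,\mu_{A',g,\delta}\Bigr)H_{r_a}(\sigma_{\lambda'}z).
$$
Proposition~\ref{muagd} decomposes the phase modulo $1$ into three pieces:
$$
\frac{d'b'}{2}\sum_{\delta,g}\delta\,r_{\delta,g}P_2\!\Bigl(\tfrac{ag}{\delta}\Bigr)-\frac{a'-1}{4}\sum_{\delta,g}r_{\delta,g}+\frac12\sum_{\delta,g}r_{\delta,g}\Bigl\lfloor\tfrac{ag}{\delta}\Bigr\rfloor.
$$
The first sum equals $2P(r_a)$ by definition. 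Expanding $d'b'$ with $d'\equiv d\pmod{24Nm}$, $ad\equiv 1\pmod{24Nm}$, and $24Nm\mid c$ shows $d'b'=(db+d^2\lambda-\lambda')/m+24N\cdot(\textrm{integer})$, whence
$$
d'b'\,P(r_a)\equiv\frac{(db+d^2\lambda-\lambda')P(r_a)}{m}\pmod 1,
$$
since $24N\cdot P(r_a)\in 2\Z$. The $(a'-1)/4$ and floor contributions are $\lambda$-independent modulo $1$ (because $4\mid c$ and $24\mid cg/\delta$), so together with the constant $dbP(r_a)/m$ they produce a single $24Nm$-th root of unity $\zeta$. Combining with the factor $e(-\lambda(t+P(r))/m)$ from the definition of $H_{r,m,t}$ yields the stated formula.

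For the last sentence, let $A\in\Gamma_1(24Nm)$. Then $a\equiv d\equiv 1\pmod{24Nm}$, so $r_a=r$ and $(d^2-1)P(r)\in m\Z$ (since $12N\mid 24Nm$ clears the denominator of $P(r)$), reducing the exponent to $-\lambda t/m-\lambda'P(r)/m$. Moreover $\lambda'\equiv b+\lambda\pmod m$, so reindexing collapses the sum to $e(bt/m)\cdot m\,H_{r,m,t}(z)$, giving
$$
H_{r,m,t}(Az)=j(A,z)^k\,\zeta\,e(bt/m)\,H_{r,m,t}(z).
$$
Raising to the $24Nm$-th power kills both roots of unity, establishing modularity of weight $24Nmk$ for $\Gamma_1(24Nm)$. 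Holomorphicity of $H_{r,m,t}$ on $\H$ is inherited from $H_r$, so the only possible poles of $H_{r,m,t}^{24Nm}$ lie at the cusps.

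The main obstacle will be the fractional-part bookkeeping in the phase, especially verifying that the $\lambda$-dependent integer correction to $d'b'$ contributes only an integer when multiplied by $P(r_a)$, so that the residual $\lambda$-independent terms fit into a single $24Nm$-th root of unity $\zeta$ rather than leaking into the $\lambda$-dependent exponent. This hinges on the strong divisibility $24Nm\mid c$ together with the integrality $12\delta P_2(g/\delta)\in\Z$.
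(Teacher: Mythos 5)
Your proof follows essentially the same route as the paper: conjugating $\left(\begin{smallmatrix}1&\lambda\\0&m\end{smallmatrix}\right)$ past $A$ to get the integral matrix $A'$ (the paper's $A_\lambda$), applying Proposition~\ref{muagd} to each eta-factor, checking that the $\lambda$-dependent corrections to the phase vanish modulo $1$ thanks to $24Nm\mid c$, and then specializing to $\Gamma_1(24Nm)$ via $\lambda'\equiv\lambda+b\pmod m$ to get the character $e(bt/m)\zeta$. The argument is correct and, if anything, slightly more explicit than the paper's about why the residual terms assemble into a single $24Nm$-th root of unity.
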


\begin{proof}
Let
$$
A_\lambda:=\begin{pmatrix}
a+c\lambda &  \frac{1}{m}\left(b+d\lambda-\lambda'(a+c\lambda)\right)\\
mc & d-c\lambda'
\end{pmatrix},
$$
so that
$$
\begin{pmatrix}
1 &  \lambda \\ & m
\end{pmatrix}A=
A_\lambda\begin{pmatrix}
1 &  \lambda' \\ & m
\end{pmatrix}
$$

Then for $A\in \Gamma_0(24Nm)$, we have by Proposition \ref{muagd}

\begin{multline*}
\eta_{\delta,g}\left(\begin{pmatrix}
1 &  \lambda \\ & m
\end{pmatrix}Az\right)^{r_{\delta, g}} = \eta_{\delta,g}\left(A_\lambda\begin{pmatrix}
1 &  \lambda' \\ & m
\end{pmatrix}z\right)^{r_{\delta, g}}\\
=e\left({r_{\delta, g}}\mu_{A_\lambda, g, \delta}\right)j\left(A_\lambda,\begin{pmatrix}
1 &  \lambda' \\ & m
\end{pmatrix}z\right)^{\delta_{g,0}r_{\delta, g}}\eta_{\delta,ag}\left(\begin{pmatrix}
1 &  \lambda' \\ & m
\end{pmatrix}z\right)^{r_{\delta, g}}\\
=\zeta_0 e\left(\frac{{r_{\delta, g}}\delta}{2}P_2\left(\frac{ag}{\delta}\right)(d-c\lambda')  \frac{1}{m}\left(b+d\lambda-\lambda'(a+c\lambda)\right)\right)j(A,z)^{\delta_{g,0}r_{\delta, g}}\eta_{\delta,ag}\left(\begin{pmatrix}
1 &  \lambda' \\ & m
\end{pmatrix}z\right)^{r_{\delta, g}}\\
=\zeta_0 e\left(\frac{{r_{\delta, g}}\delta}{2m}P_2\left(\frac{ag}{\delta}\right)\left(db+d^2\lambda-\lambda'\right)\right)j(A,z)^{\delta_{g,0}r_{\delta, g}}\eta_{\delta,ag}\left(\begin{pmatrix}
1 &  \lambda' \\ & m
\end{pmatrix}z\right)^{r_{\delta, g}},
\end{multline*}
where $\zeta_0$ is a fourth root of unity depending on $r_{\delta, g}$ and $A$. 
Thus,
$$
H_{r}\left(\begin{pmatrix}
1 &  \lambda \\ & m
\end{pmatrix}Az\right) =\zeta j(A,z)^k  e\left(\frac{P(r_a)}{m}\left(d^2\lambda-\lambda'\right)\right) H_{r_a}\left(\begin{pmatrix}
1 &  \lambda' \\ & m
\end{pmatrix}z\right).
$$
Together with \eqref{Hrmt}, this yields the formula in the proposition.\\

Moreover, note that for $A\in\Gamma_1(24Nm)$, we have $\lambda'\equiv \lambda +b\pmod{m}$ and
\begin{multline*}
H_{r, m, t}(Az)=j(A,z)^k\frac{\zeta}{m}\sum_{\lambda'\pmod{m}}e\left(\frac{\lambda'-b}{m}\left((d^2 -1)P(r)-t\right)-\frac{\lambda'}{m} P(r)\right)H_{r}\left(\begin{pmatrix}
1 &  \lambda' \\ & m
\end{pmatrix}z\right)\\
=j(A,z)^k\frac{\zeta_1}{m}\sum_{\lambda'\pmod{m}}e\left(-\frac{\lambda'}{m}(t+ P(r))\right)H_{r}\left(\begin{pmatrix}
1 &  \lambda' \\ & m
\end{pmatrix}z\right)=\zeta_1  j(A,z)^k H_{r, m, t}(z)
\end{multline*}
with $\zeta_1:=e\left(\frac{bt}{m}\right)\zeta$. Since $\zeta_1$ is a $24Nm$-th root of unity, we conclude that $H_{r,m,t}^{24Nm}$ is a weakly holomorphic modular form of weight $24Nmk$ for $\Gamma_1(24Nm)$.
\end{proof}

\section{Proof of Theorem \ref{thm} }
\begin{proof}
For $j$ large enough, we have that $H_{r,m,t}^{24Nm}\Delta^j$ is a holomorphic modular form for $\Gamma_1(24Nm)$. Let $A=\left(\begin{smallmatrix}
a &  b \\ c & d
\end{smallmatrix}\right)\in \Gamma_0(24Nm)$ and let $|_k$ denote the Petersson slash-operator, i.e. $(f|_k A)(z):=j(A,z)^{-k}f(Az)$. Then since
$$
H_{r_a}\left(\begin{pmatrix}
1 &  \lambda' \\ & m
\end{pmatrix}z\right)=e\left(\frac{\lambda'}{m}P(r_a)\right)q^{\frac{P(r_a)}{m}}\sum_{n\geq 0}c_{r_a}(n)e\left(\frac{\lambda'}{m}n\right)q^{\frac{n}{m}},
$$
we obtain by Proposition \ref{slash} 
\begin{multline*}
\left(H_{r,m,t}|_k A\right)(z)=\frac{\zeta}{m}\sum_{\lambda\pmod{m}}e\left(\frac{\lambda}{m}\left(d^2 P(r_a)-P(r)-t\right)-\frac{\lambda'}{m} P(r_a)\right) H_{r_a}\left(\begin{pmatrix}
1 &  \lambda' \\ & m
\end{pmatrix}z\right)\\
=\frac{\zeta}{m}\sum_{\lambda\pmod{m}}e\left(\frac{\lambda}{m}\left(d^2 P(r_a)-P(r)-t\right)\right)q^{\frac{P(r_a)}{m}}\sum_{n\geq 0}c_{r_a}(n)e\left(\frac{\lambda'}{m}n\right)q^{\frac{n}{m}}\\
=\frac{\zeta}{m}q^{\frac{P(r_a)}{m}}\sum_{n\geq 0}c_{r_a}(n)e\left(\frac{dbn}{m}\right)\sum_{\lambda\pmod{m}}e\left(\frac{\lambda}{m}\left(d^2 (P(r_a)+n)-P(r)-t\right)\right)q^{\frac{n}{m}},
\end{multline*}
since $\lambda'\equiv db+d^2\lambda\pmod{m}$.\\

Now assume that $n$ is the smallest nonnegative integer with $d^2 (P(r_a)+n)- P(r)\equiv t\pmod{m}$. Then we have
$$
\left(H_{r,m,t}|_k A\right)(z)=\zeta_2 c_{r_a}(n) q^{\frac{P(r_a)+n}{m}}\left(1+O\left(q^{\frac{1}{m}}\right)\right)
$$
with $\zeta_2:=e\left(\frac{dbn}{m}\right)\zeta $.\\

Suppose that $H_{r,m,t}\equiv 0\pmod{p}$. Then $$
(p^{-1}H_{r,m,t})^{24Nm}\Delta^j
\in M_{24Nmk+12j}(\Gamma_1(24Nm))\cap \Z[\zeta_{24Nm}][q].$$
The $q$-expansion principle from Corollaire 3.12 of \cite{dr}, Ch.~VII states that if 
$f$ is a modular form of weight $\kappa$ for $\Gamma_1(N)$ whose Fourier coefficients at $i\infty$ lie in $\Z[\zeta_N]$, then for any $A\in\Gamma_0(N)$, also $f|_\kappa A$ has Fourier coefficients in $\Z[\zeta_N]$ (see also Corollary 5.3 of \cite{ra}).
Thus it follows that
$$
\left((p^{-1}H_{r,m,t})^{24Nm}\Delta^j\right)|_{24Nmk+12j} A
\in\Z[\zeta_{24Nm}][q]
$$
for every $A\in\Gamma_0(24Nm)$.
By the above computation we have
\begin{multline*}
\left(\left((p^{-1}H_{r,m,t})^{24Nm}\Delta^j\right)|_{24Nmk+12j} A\right)
=p^{-24Nm}(\left(H_{r,m,t}|A\right)(z))^{24Nm}\Delta(z)^j\\
=\left(\frac{c_{r_a}(n)}{p}\right)^{24Nm}q^{24N(P(r_a)+n)+j}\left(1+O(q)\right) \in \Z[\zeta_{24Nm}][q].
\end{multline*}
This can only hold if $p$ divides $c_{r_a}(n)$.
\end{proof}

\end{document}